\newcommand\NoBlackBoxes{\global\overfullrule0pt}
\theoremstyle{plain}\newtheorem{thm}{Theorem}[section]
\theoremstyle{plain}
\theoremstyle{definition}
\theoremstyle{definition}
\theoremstyle{plain}\newtheorem{defi}[thm]{Definition}
\theoremstyle{plain}\newtheorem{lem}[thm]{Lemma}
\theoremstyle{plain}\newtheorem{prop}[thm]{Proposition}
\theoremstyle{definition}
\numberwithin{equation}{section}
\newcommand{\E}{{\mathbb{E}}}
\newcommand{\N}{{\mathbb{N}}}
\newcommand{\R}{{\mathbb{R}}}
\renewcommand{\P}{{\mathbb{P}}}
\newcommand{\ind}{\operatorname{1}}
\renewcommand{\ind}[1]{\mathbbm{1}_{\left\{#1\right\}}}
\newcommand{\WK}[1]{\mathbb{P}\left(#1\right)}
\newcommand{\EW}[1]{\mathbb{E}\left[#1\right]}
\newcommand{\ERG}{Erd\H{o}s-R\'enyi random graph}
\newcommand{\1}{\mathbbm{1}}
\let\eps\varepsilon
\newcommand{\nconv}{\xrightarrow[]{n\to\infty}}
\newcommand{\Pconv}{\xrightarrow[n\to\infty]{\P}}
\newcommand{\dconv}{\xRightarrow{n\to\infty}}
\begin{document}

\title[CLT for the average target hitting time in Erd\H{o}s-Renyi graphs]
{A Central Limit Theorem for the average target hitting time
for a random walk on a random graph}

\author[Matthias L\"owe]
{Matthias L\"owe }%\thanks{Research of the second author was
%funded by the Deutsche Forschungsgemeinschaft (DFG, German Research Foundation) under Germany 's Excellence Strategy
%EXC 2044 390685587, Mathematics M\"unster: Dynamics Geometry -Structure}
\address[Matthias L\"owe]{Fachbereich Mathematik und Informatik,
Universit\"at M\"unster,
Einsteinstra\ss e 62,
48149 M\"unster,
Germany}

\email[Matthias L\"owe]{maloewe@uni-muenster.de}

\author[Sara Terveer]
{Sara Terveer}%\thanks{Research of the second author was
%funded by the Deutsche Forschungsgemeinschaft (DFG, German Research Foundation) under Germany 's Excellence Strategy
%EXC 2044 Ö³90685587, Mathematics M\"unster: Dynamics ×eometry -Structure}
\address[Sara Terveer]{Fakult\"at f\"ur Mathematik,
Universit\"at Bielefeld,
Postfach 100131,
33501 Bielefeld,
Germany}

\email[Sara Terveer]{sterveer@math.uni-bielefeld.de}

\subjclass[2010]{}
\keywords{}

\thanks{Research was
funded by the Deutsche Forschungsgemeinschaft (DFG, German Research Foundation) under Germany 's Excellence Strategy
EXC 2044-390685587, Mathematics M\"unster: Dynamics-Geometry-Structure}

\maketitle

\begin{abstract}
Consider a simple random walk on a realization of an Erd\H{o}s-R\'enyi graph. Assuming that it is asymptotically almost surely (a.a.s.)\ connected, we prove a Central Limit Theorem (CLT) for the average target hitting time. The latter is the expected time it takes the random walk on average to first hit a fixed vertex $j$. The average is taken over all possible starting vertices, with respect to $\pi$, the invariant measure of the random walk.
\end{abstract}

\section{Introduction}\label{sec:intro}
Let $G_n=(V_n,E_n)$ be a realization of an \ERG{} $\mathcal{G}(n,p)$, i.e.\ we set $V_n:=\{1, \ldots ,n\}$ and we connect any two 
vertices $i \neq j \in V_n$ by an undirected edge $\{i,j\}$ with probability $p$, independently of all other edges. We collect the 
resulting edges in $E_n$. We will choose the probability $p=p_{n}$ 
such that it may and typically will depend on $n$
%, that it is either decreasing or increasing in $n$, 
and such that $n p_n$ diverges to infinity. 
More precisely, we will require that for some fixed $\xi>1$,
\begin{equation}
	\frac{(\log n)^{6\xi}}{np_n}\to 0
	\label{eq:connectivity}
\end{equation}
as $n\to\infty$ to ensure that the random graph is asymptotically almost surely connected and our results always tacitly assume 
connectivity of the graph. The parameter $\xi$ is immaterial for most of the 
proof and arises solely as an artifact of a spectral property of (a close relative of) the adjacency matrix of $G_n$ 
to be discussed later (see Lemma \ref{lem:spectralgap} below).

We will additionally require
\begin{equation}
p\leq \overline{p}< 1
\label{eq:pbounded}
\end{equation} for some fixed $\overline{p}\in(0,1)$.
%Since we are interested in results for $n$ going to infinity, consider a sequence of independent random graphs $(G_n)_{n\in\N}$.
%
%and want our random variables to live on the same probability space. 
%Let us refine our setup and construct the whole sequence of random graphs as follows. 
%Given the sequence $(p_n)$ let us consider $i\neq j \in \N$. Let $a_{i,j}(n)$, $i,j=1,\dots,n$ be the indicator random variable for the 
%event that the undirected edge $\{i,j\}$ is present in the random graph $G_n$.
%%Again, we index $a_{i,j}$ by $n$ because $|V_n|=n$.
%We assume that the $a_{i,j}(n)$ form a time-inhomogeneous Markov chain, i.e.\
%	\begin{align*}
%	&\P(a_{i,j}(n+1)=0 |a_{i,j}(n)=0 )=1=1-\P(a_{i,j}(n+1)=1 |a_{i,j}(n)=0 )\\
%	%&\P(a_{i,j}(n+1)=1 |a_{i,j}(n)=0 )=0\\
%	&\P(a_{i,j}(n+1)=0 |a_{i,j}(n)=1 )=1-\frac{p_{n+1}}{p_{n}}=1-\P(a_{i,j}(n+1)=1 |a_{i,j}(n)=1 ).%\\
%	%&\P(a_{i,j}(n+1)=1 |a_{i,j}(n)=1 )=q(n)
%	\end{align*}
%for all $i\neq j$. The graph is assumed to contain no loops, thus $a_{ii}(n)=0$ for all $i\in V_n$ and all $n \in \N$.
%This setup only works if $(p_n)$ is non-increasing.
%If $(p_n)$ is increasing, we can construct the complement of $G_n$, $G_n^c$, in the same fashion. Here $G_n^c$ is the graph on $\{1, \dots, n\}$ that contains an edge $e$ if and only if $e \notin E_n$. If we can construct $G_n^c$, we can construct $G_n$ as well.
In the following we omit the index $n$ whenever suitable. Unless otherwise remarked, the calculations are made on $G=G_n=(V_n,E_n)=(V,E)$.
%\ST{Ich habe das hier entsprechend angepasst und die Monotonie-Bedingung erst hier erg\"anzt, wie im anderen Artikel.}

For fixed $n$ consider the simple random walk in discrete time $(X_t)$ on $G$: If $X_t$ is in the vertex $i\in V$ 
at time $t$, $X_{t+1}$ will be in the vertex
$j$ with probability $\frac{1}{d_i}$, where $d_i$ denotes the degree of $i$, if $\{i,j\}\in E$, and with probability 0, otherwise.
The invariant distribution of this walk is given by
\[\pi_i\coloneqq \frac{d_i}{\sum\limits_{j\in V}d_j}=\frac{d_i}{2|E|}.\]
Let $H_{ij}$ be the expected time it takes the walk to reach vertex $j\in V$ when starting from vertex $i\in V$. Of course, $H_{ij}$
will generally, among others, depend on the graph distance of $i$ and $j$. To compensate for this one introduces  
\begin{equation}\label{eq:defhitting}
H_j:=\sum\limits_{i\in V}\pi_iH_{ij}\quad\text{ and }\quad H^i:=\sum\limits_{j\in V}\pi_j H_{ij}
\end{equation}
These quantities are called the \emph{mean target hitting time} and the \emph{mean starting hitting time}, respectively. 
Note that $H_j$ and $H^i$ are expectation values in the random walk measure, but with respect to the realization of the random graph, 
they are random variables.
In \cite{LT14}, the asymptotic behaviour of $H_j$ and $H^i$ was analyzed on the level of a Law of Large Numbers. It was shown that
\[H_j=n(1+o(1))\quad\text{ as well as }\quad H^i=n(1+o(1)) \]
asymptotically almost surely proving a conjecture from \cite{sood}. In \cite{HL19} this result was extended to random hypergraphs. Such results can be considered Laws of Large Numbers for the average target or starting hitting times, respectively. After having
proved them, a natural question 
is the one about fluctuations around such a Law of Large Numbers. 
In  \cite{loewe2020} we proved a Central Limit Theorem (CLT) for the mean starting hitting time $H^i$
 based on a CLT for incomplete U-statistics over a triangular array of random variables proved in \cite{loewe2020A}. However, this technique cannot be used to also 
prove a CLT for the mean target hitting time.

The goal of this note is to also prove a CLT for $H_j$. Our main result will be
\begin{thm}\label{thm:main}
	Let $H_j$ be defined as in \eqref{eq:defhitting} and assume $p$ satisfies \eqref{eq:connectivity} and \eqref{eq:pbounded}. Then
	\[\sqrt{\frac{p}{n(1-p)}}\bigl(H_j-n\bigr)\dconv \mathcal{N}(0,1),\]
	where $\Rightarrow$ denotes convergence in distribution and $\mathcal{N}(0,1)$ is a standard normally distributed random variable.
\end{thm}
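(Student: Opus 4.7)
The plan is to reduce the CLT for $H_j$ to the classical binomial CLT for the degree $d_j$ via a spectral representation of the hitting time. Write $N := D^{-1/2} A D^{-1/2}$ for the symmetrically normalized adjacency matrix, with eigenpairs $(\mu_k,\psi_k)_{k=1}^n$ ordered so that $1=\mu_1>\mu_2\geq\cdots\geq\mu_n$, and recall $\psi_1(i)=\sqrt{\pi_i}$. From the standard spectral formula
\begin{equation*}
H_{ij} \;=\; \sum_{k\geq 2}\frac{1}{1-\mu_k}\!\left(\frac{\psi_k(j)^2}{\pi_j}-\frac{\psi_k(i)\psi_k(j)}{\sqrt{\pi_i\pi_j}}\right),
\end{equation*}
averaging against $\pi_i$ kills the off-diagonal term (since $\langle\sqrt{\pi},\psi_k\rangle=0$ for $k\geq 2$) and yields $\pi_j H_j=\sum_{k\geq 2}\psi_k(j)^2/(1-\mu_k)$. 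Using the expansion $1/(1-\mu_k)=1+\mu_k+\mu_k^2/(1-\mu_k)$ together with the two spectral identities $\sum_k\psi_k(j)^2=1$ and $\sum_k\mu_k\psi_k(j)^2=N_{jj}=0$ (and $\mu_1=1$, $\psi_1(j)^2=\pi_j$), one obtains after simplification
\begin{equation*}
H_j \;=\; \frac{1}{\pi_j}\;-\;2\;+\;R_n,\qquad R_n\;:=\;\frac{1}{\pi_j}\sum_{k\geq 2}\psi_k(j)^2\,\frac{\mu_k^2}{1-\mu_k}.
\end{equation*}

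The first step is to show that $R_n$ is negligible at the fluctuation scale $\sqrt{n(1-p)/p}$. Combining the eigenvector delocalization provided by Conjecture~\ref{conj} with the standard Feige-Ofek-type spectral bound $\max_{k\geq 2}|\mu_k|=O(1/\sqrt{np})$ a.a.s.\ in our regime, the factors $(1-\mu_k)^{-1}$ are $1+o(1)$ uniformly in $k$, so that
\begin{equation*}
|R_n|\;\leq\;(1+o(1))\,\frac{1}{\pi_j}\bigl((N^2)_{jj}-\pi_j\bigr).
\end{equation*}
Since $(N^2)_{jj}=\sum_{\ell\sim j}(d_j d_\ell)^{-1}$ is of order $1/(np)$ in probability and $1/\pi_j$ is of order $n$, we get $R_n=O_{\mathbb{P}}(1/p)$. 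Because $np\to\infty$ and $p\leq\bar p<1$, multiplication by $\sqrt{p/(n(1-p))}$ produces $o_{\mathbb{P}}(1)$; the additive $-2$ is absorbed analogously.

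It remains to prove $\sqrt{p/(n(1-p))}\,(1/\pi_j-n)\Rightarrow\mathcal{N}(0,1)$. Splitting $2|E|=2|E(G\setminus\{j\})|+2d_j$ yields
\begin{equation*}
\frac{1}{\pi_j}-n\;=\;\frac{Y_j}{d_j},\qquad Y_j\;:=\;2|E(G\setminus\{j\})|-(n-2)d_j,
\end{equation*}
and $Y_j$ is a centered linear combination of two \emph{independent} groups of i.i.d.\ $\mathrm{Bernoulli}(p)$ variables (the $\binom{n-1}{2}$ non-$j$ edge indicators and the $n-1$ indicators for edges incident to $j$), with $\mathrm{Var}(Y_j)=n(n-1)(n-2)\,p(1-p)$. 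The incident-edge contribution dominates the variance by a factor of order $n$, so a Lindeberg CLT delivers $Y_j/\sqrt{\mathrm{Var}(Y_j)}\Rightarrow\mathcal{N}(0,1)$. Combined with $d_j/((n-1)p)\to 1$ in probability and the elementary computation
\begin{equation*}
\sqrt{\frac{p}{n(1-p)}}\cdot\frac{\sqrt{\mathrm{Var}(Y_j)}}{(n-1)p}\;=\;\sqrt{\frac{n-2}{n-1}}\;\longrightarrow\;1,
\end{equation*}
Slutsky's theorem concludes the argument.

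The most delicate step is the bound on $R_n$: it requires the delocalization conjecture to provide flatness of the eigenvectors uniformly in $k$ that is strong enough to prevent the tail of the spectrum (where $1-\mu_k$ is smallest) from producing contributions of order $\sqrt{n(1-p)/p}$. Once this remainder is controlled, the remaining ingredients reduce entirely to a classical CLT for linear statistics of independent Bernoullis.
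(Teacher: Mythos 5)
Your proof is correct and reaches the same conclusion, but it takes a genuinely different route from the paper in both halves of the argument. For the Gaussian part, the paper passes to logarithms, proves a CLT for $\log\bigl(2|E\setminus N_j|/d_j+2\bigr)-\log n$ via a Lindeberg--Feller CLT combined with a generalized (uniformly continuous) Delta method, and then applies the Delta method twice more to undo the logarithm; you instead work additively, writing $1/\pi_j-n=Y_j/d_j$ with $Y_j=2|E(G\setminus\{j\})|-(n-2)d_j$ a centered linear statistic of independent Bernoullis, and conclude by Lindeberg plus Slutsky. Your computation $\var(Y_j)=n(n-1)(n-2)p(1-p)$ and the normalization $\sqrt{(n-2)/(n-1)}\to 1$ check out, and this bypasses the paper's entire Delta-method apparatus. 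For the remainder, the paper bounds $Z_n=\sum_{k\ge2}\frac{\lambda_k^2}{1-\lambda_k}\nu_{kj}^2$ by $\max_k\|\nu_k\|_\infty^2\cdot\bigl(\sum_k\lambda_k^2-1\bigr)$, which forces it to invoke Conjecture \ref{conj} together with a CLT for $\sum_k\lambda_k^2$ from the authors' earlier work; you instead use the exact local identity $\sum_{k}\mu_k^2\psi_k(j)^2=(N^2)_{jj}=\sum_{\ell\sim j}(d_jd_\ell)^{-1}$, which with degree concentration gives $R_n=O_{\P}(1/p)$ and hence $o_{\P}(\sqrt{n(1-p)/p})$. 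This is the more interesting divergence: contrary to what you say in your first and last paragraphs, your bound on $R_n$ does \emph{not} use the delocalization conjecture at all --- it only needs the spectral gap $\max_{k\ge2}|\mu_k|\le 1/2$ (so that $(1-\mu_k)^{-1}$ is uniformly bounded) plus uniform degree concentration, both available in the regime $\log n/(np)\to0$. In other words, your argument suggests the theorem holds unconditionally (modulo the spectral gap lemma, which the paper also assumes via its Lemma \ref{lem:spectralgap}), which is stronger than the paper's conditional statement. The only steps you assert without proof --- the Feige--Ofek-type bound for the normalized matrix and the uniform concentration of the $d_\ell$ --- are standard in this regime and are at the same level of rigor as the paper's own treatment of the spectral gap.
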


To prove this theorem, we will proceed as follows: In Section \ref{sec:spectraldecomposition}, we will start by recalling a classic spectral decomposition of the hitting times by \cite{Lovasz93}. The resulting term (see \eqref{repr_target} below) will be considered in two parts: The first one can be treated using an application of (a refined version of) the Delta method in Section \ref{sec:delta-method}. It will turn out that this term is responsible for the Gaussian fluctuations. The second part 
is a sum over inverse eigenvalues multiplied with components of the eigenvectors of a transformed adjacency matrix of the realization of the random graph. We will show that it is negligible on the scale of the CLT in Theorem \ref{thm:main}. Finally, in Section \ref{final}, we prove the main theorem by merging the aforementioned results and, yet again, applying the Delta method. Some auxiliary results are given in the Appendix.

Throughout this note, we will use the notation $a_n\approx b_n$ for $a_n=b_n(1+o(1))$ and $a_n \lesssim b_n$, if $a_n \le b_n(1+o(1))$.

\section{Spectral decomposition of the hitting times}\label{sec:spectraldecomposition}
We start as in \cite{LT14}, the spectral decomposition of the hitting times, taken from \cite{Lovasz93}, Section 3.
To introduce it, let $A$ be the adjacency matrix of $G$, i.e.\ $A=(a_{ij})$, with $a_{ij}=\1
_{\{i,j\}\in E}$ as above.
Let $D$ be the diagonal matrix $D=(\mathrm{diag}(d_i))_{i=1}^{n}$.
With $A$ we associate the matrix $B$ defined as $B:=D^{-\frac 12}AD^{-\frac 12}$. The matrix $B$ is intrinsically related to the symmetrically normalized Laplacian matrix $L$ of $G$ defined as $L:=\operatorname{Id}-B$.
Note that $B=(b_{ij})$ with $$b_{ij}=\frac{a_{ij}}{\sqrt{d_i d_j}}.$$ Therefore, $B$ is symmetric and hence has real eigenvalues.
$\lambda_1 = 1$ is an eigenvalue of $B$, since $w:=(\sqrt{d_1}, \cdots, \sqrt{d_{n}})$ satisfies $Bw=w$ and
by the Perron-Frobenius theorem $\lambda_1$ is the largest eigenvalue.
We order the eigenvalues $\lambda_k$ of $B$ such that $$\lambda_1 \geq \lambda_2 \geq \cdots \geq \lambda_{n} $$
and we normalize the eigenvectors $v_k$ to the eigenvalues $\lambda_k$ to have length one. Thus, in particular,
$$	v_1 := \tfrac{w}{\sqrt{2|{E}|}}= \left(\sqrt{\tfrac{d_j}{2|{E}|}} \right)_{j=1}^{n}= \left(\sqrt{\pi_j} \right)_{j=1}^{n}.$$
Also recall that the matrix of the eigenvectors is orthogonal and the scalar product of two eigenvectors $v_i$ and $v_j$  satisfies $\langle v_i, v_j\rangle =\delta_{ij}$.
With this, one has the following representation of $H_j$: %the average target hitting time and
%the average starting hitting time.
\begin{prop}[cf. {\cite{Lovasz93}, Theorem 3.1 and Formula (3.3)}]
	For all $i\neq j \in V$ we have
	$
	H_{ij}=2 |{E}| \sum_{k=2}^{n} \frac{1}{1- \lambda_k} \left(   \frac{v_{k, j}^2}{d_j}   -   \frac{v_{k, i}  v_{k, j}}{\sqrt{d_i d_j}}     \right).
	$
	Thus,
	%\begin{equation}\label{repr_target}
	%H_j =\frac{1}{\pi_j} \sum_{k=2}^{n+1} \frac{1}{1-\lambda_k} v_{k,j}^2
	%\end{equation}
	%as well as
	\begin{equation}\label{repr_target}
	H_j=\frac{2|E|}{d_j}\sum_{k=2}^{n} \frac{1}{1-\lambda_k}v_{kj}^2.
	\end{equation}
\end{prop}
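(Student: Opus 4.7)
The plan is to follow Lov\'asz's classical derivation through the fundamental matrix of the random walk. Let $P := D^{-1}A$ be the transition matrix; the crucial observation is the similarity $P = D^{-1/2} B D^{1/2}$, so that $P$ and $B$ share the spectrum $\lambda_1 \ge \cdots \ge \lambda_n$, the right-eigenvectors of $P$ for $\lambda_k$ are $D^{-1/2} v_k$, and the corresponding left-eigenvectors are $v_k^\top D^{1/2}$. The leading mode ($k=1$) recovers $\mathbf{1}$ on the right and a multiple of $\pi$ on the left, matching $v_1 = (\sqrt{\pi_j})_j$.

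The hinge of the proof is the standard identity
\[
H_{ij} \;=\; \frac{Z_{jj} - Z_{ij}}{\pi_j},
\]
where $Z := \sum_{t \ge 0}(P^t - \Pi)$ is the fundamental matrix and $\Pi$ is the rank-one matrix whose rows all equal $\pi$. One obtains it by combining the first-step relation for $H_{ij}$, rewritten as $(I-P)\, h_j = \mathbf{1} - \pi_j^{-1} e_j$ with $h_j := (H_{ij})_i$ (the constant on the right is fixed using $\pi^\top(I-P)=0$), with the two standard facts $Z(I-P) = I - \Pi$ and $Z\mathbf{1} = 0$. The series defining $Z$ converges because $|\lambda_k| < 1$ for all $k \ge 2$ on the a.a.s.-connected, non-bipartite random graph.

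The spectral step is then a direct computation. From
\[
P^t - \Pi \;=\; \sum_{k=2}^{n} \lambda_k^{t}\, D^{-1/2} v_k v_k^\top D^{1/2}
\]
and geometric summation in $t$ I read off
\[
Z_{ij} \;=\; \sum_{k=2}^{n} \frac{1}{1-\lambda_k}\, \sqrt{\frac{d_j}{d_i}}\, v_{k,i}\, v_{k,j}.
\]
Substituting into $(Z_{jj}-Z_{ij})/\pi_j$ and using $\pi_j = d_j/(2|E|)$ delivers the first displayed formula for $H_{ij}$. The second formula $H_j = \sum_i \pi_i H_{ij}$ follows at once: the diagonal term $v_{k,j}^2/d_j$ is independent of $i$ and survives by $\sum_i \pi_i = 1$, whereas the cross term is proportional to
\[
\sum_i \pi_i \frac{v_{k,i}}{\sqrt{d_i}} \;=\; \frac{1}{\sqrt{2|E|}} \sum_i v_{1,i}\, v_{k,i} \;=\; \frac{\langle v_1, v_k\rangle}{\sqrt{2|E|}},
\]
which vanishes for $k\ge 2$ by orthonormality of the $v_k$.

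The only genuinely delicate step is the bookkeeping around the similarity between the non-symmetric $P$ and the symmetric $B$: one has to distinguish left- and right-eigenvectors of $P$ so that the $\sqrt{d_j/d_i}$ factors land in the right places, and one must verify that the $k=1$ contribution in the spectral expansion of $P^t$ coincides exactly with $\Pi$ so that it cancels cleanly when we subtract off the stationary part. Beyond that bookkeeping and the initial derivation of the fundamental-matrix identity, the argument is a geometric summation followed by one use of orthogonality.
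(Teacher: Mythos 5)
Your proof is correct and is essentially the classical derivation from Lov\'asz's survey that the paper simply cites without reproducing: the fundamental-matrix identity $H_{ij}=(Z_{jj}-Z_{ij})/\pi_j$, the similarity $P=D^{-1/2}BD^{1/2}$, geometric summation, and one use of orthogonality $\langle v_1,v_k\rangle=0$ for the averaged formula. The only point worth flagging is that convergence of $\sum_t(P^t-\Pi)$ uses $|\lambda_k|<1$ for $k\ge 2$, i.e.\ connectivity and non-bipartiteness, which you correctly note hold a.a.s.\ in the paper's regime (and are implicitly assumed for $H_{ij}$ to be finite at all).
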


Notice that the terms in the sum cannot be easily controlled because a priori we know little about the eigenvectors of $B$. This is
in contrast to the situation where one considers the eigenvectors of $A$. For them, in \cite{Erdoes13} the authors showed complete
delocalization (with the exception of the Perron-Frobenius eigenvector, of course). 

We will subsequently decompose the expression in \eqref{repr_target} into the term in front of the sum and the sum itself and analyze both separately.

\section{The Delta Method}\label{sec:delta-method}

We begin with the factor in front of the sum in \eqref{repr_target}: If we denote by $N_j$ those edges that link vertex $j$ to another vertex, we have that $|N_j|=d_j$ and thus
\[\frac{2|E|}{d_j}=\frac{2|E\setminus N_j|}{d_j}+2,\]
where $E\setminus N_j$
denotes the set of edges excluding those that link $j$ to other vertices.
In a first step, we will prove that $\log \frac{|E\setminus N_j|}{d_j}$ is asymptotically normally distributed.

\begin{prop} 
	\label{eq:distconv}Under the assumptions of Theorem \ref{thm:main},
	\begin{equation}
		\sqrt{\frac{np}{1-p}}\left(\log\left(2 \frac{|E\setminus N_j|}{d_j}+2\right)-\log n\right)\dconv\mathcal{N}\left(0,1\right).
	\end{equation}
\end{prop}
We will prepare the proof of Proposition \ref{eq:distconv} by a lemma. 
To this end, let us fix a vertex $j$ and have a closer look at $E\setminus N_j$.
Notice that 
\[|E\setminus N_j|=\sum_{\substack{i<k\\ i,k\neq j}}a_{ik}.\]
We can immediately prove a result on convergence in distribution for this last term:
%and $\tilde{d}_i^j\sim \mathrm{Bin}(n-1,p)$. \ML{wer ist denn $\tilde{d}_i^j$?}
\begin{lem}\label{lem:lindebergfeller}
	By the Lindeberg-Feller CLT for triangular arrays of random variables,
	 we have
	\[\sqrt{\frac{\binom{n-1}{2}}{p(1-p)}}\Biggl(\frac{|E\setminus N_j|}{\binom{n-1}{2}}-p\Biggr)\dconv \mathcal{N}(0,1).\]
	\end{lem}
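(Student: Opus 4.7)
The plan is to identify $|E\setminus N_j|$ as a row sum in a triangular array of independent Bernoulli variables and then apply the Lindeberg-Feller CLT. For each fixed $n$, the $m_n:=\binom{n-1}{2}$ edge indicators $a_{ik}$ with $i<k$ and $i,k\neq j$ are independent Bernoulli$(p)$ random variables, so
\[
\EW{|E\setminus N_j|}=m_n p,\qquad \var(|E\setminus N_j|)=m_n p(1-p).
\]
Consequently the left-hand side of the claim coincides with the standardized row sum
\[
S_n:=\sum_{\substack{i<k\\ i,k\neq j}} X_{n,\{i,k\}},\qquad X_{n,\{i,k\}}:=\frac{a_{ik}-p}{\sqrt{m_n p(1-p)}},
\]
whose summands have mean zero and whose variances sum to $1$ by construction.

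To invoke the Lindeberg-Feller theorem it therefore remains only to verify the Lindeberg condition
\[
L_n(\eps):=\sum_{\substack{i<k\\ i,k\neq j}}\EW{X_{n,\{i,k\}}^2\,\ind{|X_{n,\{i,k\}}|>\eps}}\nconv 0\qquad\text{for every }\eps>0.
\]
Since $|a_{ik}-p|\le 1$, one has the deterministic bound $|X_{n,\{i,k\}}|\le 1/\sqrt{m_n p(1-p)}$. The standing hypotheses $np\to\infty$ and $p\le\bar p<1$ give
\[
m_n p(1-p)\ge \binom{n-1}{2}\,p(1-\bar p)\nconv\infty,
\]
so for $n$ large enough $1/\sqrt{m_n p(1-p)}<\eps$ and the indicator in $L_n(\eps)$ vanishes identically. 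Hence $L_n(\eps)=0$ for all sufficiently large $n$, and Lindeberg-Feller yields $S_n\dconv\mathcal{N}(0,1)$, which is the claim.

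The argument is essentially a triangular-array CLT for independent Bernoulli random variables; the only genuinely non-trivial point is the observation that $np\to\infty$ together with $p\le\bar p<1$ keeps $m_n p(1-p)$ diverging, which is precisely what trivializes the Lindeberg condition. No further obstacle is to be expected here.
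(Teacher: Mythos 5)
Your proof is correct, but it takes a different (and in fact more economical) route than the paper. The paper does not use the individual edge indicators as the triangular array; instead it groups them by vertex, setting $S_{n,i}=\sum_{k<i,\,k\neq j}a_{ik}$ and applying Lindeberg--Feller to the $n-1$ independent binomial partial sums $Y_{n,i}=(S_{n,i}-\E S_{n,i})/\sqrt{\binom{n-1}{2}p(1-p)}$. Because those summands are not uniformly small, the paper must verify the Lindeberg condition the hard way, via a fourth-moment computation for the binomial distribution combined with the Cauchy--Schwarz and Markov inequalities. You instead take the finest decomposition into the $\binom{n-1}{2}$ centered Bernoulli variables $X_{n,\{i,k\}}$, each deterministically bounded by $1/\sqrt{\binom{n-1}{2}p(1-p)}$; since $\binom{n-1}{2}p(1-p)\gtrsim n(np)(1-\bar p)/2\nconv\infty$ under the standing hypotheses, the indicator in the Lindeberg sum vanishes identically for large $n$ and the condition holds trivially. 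Both arguments are valid applications of the same theorem; yours buys a one-line Lindeberg verification at no cost, while the paper's grouping is not needed for anything else in the argument. The only point worth making explicit in your write-up is the (easy) fact that the $a_{ik}$ with $i<k$, $i,k\neq j$ are jointly independent, which is what justifies both the variance identity $\var(|E\setminus N_j|)=\binom{n-1}{2}p(1-p)$ and the applicability of Lindeberg--Feller.
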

%\ML{Hier brauchen wir ein Zitat, weil $p$ von $n$ abh\"angt .. ich vermute, dass es das aber irgendwo in der Random Graph Litertaur gibt}
%\ST{Ja? Ich dachte eigentlich, dass klassisches Lindeberg-Feller f\"ur triangular arrays ausreicht. Habe die Voraussetzungen daf\"ur der Vollst\"andigkeit halber noch einmal bewiesen}
\begin{proof}
	For $i=2,\dots,n$, $i\neq j$ let $S_{n,i}\coloneqq \sum\limits_{\substack{k=1\\k\neq j}}^{i-1}a_{ik}$ and set 
	$$Y_{n,i}\coloneqq\frac{1}{\sqrt{\binom{n-1}{2}p(1-p)}}(S_{n,i}-\E S_{n,i}).$$ 
	Obviously, the $S_{n,i}$ and therefore the $Y_{n,i}$ are independent in $i$ for fixed $n$ by construction. Furthermore, $\E Y_{n,i}=0$.
Denoting by $X \sim \mathrm{Bin}(N,q)$ the binomial distribution with parameters $N$ and $q$ of a random variable, from $S_{n,i}\sim\mathrm{Bin}(i-1,p)$ for $i< j$ and $S_{n,i}\sim\mathrm{Bin}(i-2,p)$ for $i>j$ we obtain: 
	\begin{align*}\sum\limits_{\substack{i=1\\i\neq j}}^n\E Y_{n,i}^2&=\sum\limits_{\substack{i=1\\i\neq j}}^n\frac{\mathbb{V} S_{n,i}}{\binom{n-1}{2}p(1-p)}=\frac{1}{\binom{n-1}{2}p(1-p)}\left(\sum\limits_{i=1}^{j-1}(i-1)p(1-p)+\sum\limits_{i=j+1}^n(i-2)p(1-p)\right)\\
		&=\frac{1}{\binom{n-1}{2}}\left(\sum\limits_{i=1}^n(i-1)-(j-1)-\sum\limits_{i=j+1}^n1\right)=\frac{1}{\binom{n-1}{2}}\left(\binom{n}{2}-(n-1)\right)=1.\\
	\end{align*}
	Furthermore,  using that the fourth central moment of a $B(N,q)$ distributed random variable $Z$ is given by
$
\E\left[\left(Z-\E Z\right)^4\right]= Nq(1-q)\left(1+3(N-2)q(1-q)\right)
$	
	we find that
	\begin{align*}E Y_{n,i}^4&=\frac{1}{\binom{n-1}{2}^2p^2(1-p)^2}\EW{(S_{n,i}-\E S_{n,i})^4}\\
	&\le \frac{1}{\binom{n-1}{2}^2p^2(1-p)^2}ip(1-p)\left(1+3(i-2)p(1-p)\right)\\
		&\lesssim \frac{np(1-p)3(n-2)p(1-p)}{\binom{n-1}{2}^2p^2(1-p)^2}=\frac{3n(n-2)}{\binom{n-1}{2}^2}\approx\frac{12}{n^2}
	\end{align*}
Thus, by the Cauchy-Schwarz inequality and Markov's inequality we estimate that
\[\sum\limits_{\substack{i=1\\i\neq j}}^n\EW{Y_{n,i}^2\ind{{|Y_{n,i}|>\delta}}}\leq \sum\limits_{\substack{i=1\\i\neq j}1}^n\sqrt{\EW{Y_{n,i}^4}\WK{|Y_{n,i}|>\delta}}\leq \sum\limits_{\substack{i=1\\i\neq j}}^n\sqrt{\frac{\EW{Y_{n,i}^4}^2}{\delta^4}}\lesssim n \frac{12}{n^2}\frac{1}{\delta^2}\nconv 0.\] 
Therefore, the Lindeberg-condition holds and we can apply the Lindeberg-Feller CLT for triangular arrays to $((Y_{n,i})_i)_n$. 
This immediately yields
$$\sum\limits_{i=1}^nY_{ni}\dconv \mathcal{N}(0,1).$$
From $\sum\limits_{\substack{i=1\\i\neq j}}^nS_{n,i}=|E\setminus N_j|$ and thus $\sum\limits_{\substack{i=1\\i\neq j}}^n\E{S_{n,i}}=\binom{n-1}{2}p$ we arrive at
\[\tfrac{1}{\sqrt{\binom{n-1}{2}p(1-p)}}\Big(|E\setminus N_j|-\tbinom{n-1}{2}p\Big)=\sum\limits_{\substack{i=1\\i\neq j}}Y_{n,i}\dconv \mathcal{N}(0,1).\]
This is our claim.
\end{proof}

We would like to apply this lemma by applying the Delta method in the form of Theorem \ref{delta-method} in the appendix. To do so, we need to define uniform continuity of a function over a sequence:
\begin{defi}\label{defn:unif-cont-seq}
	Consider an open set $D\subset\R^k$, $k\in\N$ and a sequence of vectors $(x_n)_{n \in \N} \subset D$. 
	For $l\in\N$, let $f:D\to\R^l$ be a continuous function (on $D$). We call $f$ 
	\emph{uniformly continuous with respect to the sequence $(x_n)$}\index{continuous, uniformly with respect to a sequence} 
	if for any $\eps>0$ there is a $\delta>0$ and an $N\in\N$, so that for all $z\in\R^k$ with $\|z\|<\delta$ and $n\geq\N$
	\[\|f(x_n+z)-f(x_n)\|<\eps.\]  
	Here, by $\|\cdot\|$, we denote the Euclidean norm.                  
\end{defi}

With this definition and the Delta method as stated in the appendix in mind, we can prove the proposition:

\begin{proof}[Proof of Proposition \ref{eq:distconv}]
Note that by Levy's CLT, we also know that
\[\tfrac{1}{\sqrt{(n-1)p(1-p)}}(d_j-(n-1)p)\dconv \mathcal{N}(0,1).\]
Thus by independence of $d_j$ and $E\setminus N_j$ and Lemma \ref{lem:lindebergfeller}:
\[\left(\frac{|E\setminus N_j|}{\sqrt{\binom{n-1}{2}p(1-p)}},\frac{d_j}{\sqrt{(n-1)p(1-p)}}\right)-\left(\frac{\binom{n-1}{2}p}{\sqrt{\binom{n-1}{2}p(1-p)}},\frac{(n-1)p}{\sqrt{(n-1)p(1-p)}}\right)\dconv \mathcal{N}(0,I),\]
where $I$ is the $2 \times 2$ identity matrix.
Consequently
\[r_n\cdot(T_n-\theta_n)\dconv \mathcal{N}(0,I)=T\]
%that is
%\[\sqrt{\frac{2}{n-1}}\sqrt{\frac{np}{1-p}}\left[\sqrt{\frac{n-1}{2}}\left(\frac{1}{\binom{n}{2}p\frac{2}{n-1}}|E\setminus d_j|,\frac{1}{np}d_j\right)-\sqrt{\frac{n-1}{2}}\left(\sqrt{\frac{n-1}{2}},1\right)\right]\dconv \mathcal{N}(0,I)\]
with
\begin{equation} \label{eq:defsVariablen}
	r_n:=\sqrt{\tfrac{(n-1)p}{1-p}},\text{ }T_n:=\left(\tfrac{|E\setminus N_j|}{\sqrt{\binom{n-1}{2}(n-1)p^2}},\tfrac{d_j}{(n-1)p}\right)^t,\text{ and }\theta_n=\left(\sqrt{\tfrac{n-2}{2}},1\right)^t
\end{equation}

Now consider the situation of Definition \ref{defn:unif-cont-seq} with $k=2$ and $D=(0,\infty)^2$.
Then $\theta_n\in D$ for $n\geq 3$. 
Moreover, consider the continuous differentiable function 
\begin{align*}
\phi:D&\to\R ,\quad
(x,y)\mapsto \log \tfrac{x}{y}
\end{align*} 
Note that $\nabla \phi(x,y)=
%\left(\frac{1}{\frac{x}{y}}\frac{1}{y},-\frac{1}{\frac{x}{y}}\frac{x}{y^2}\right)=
\left(\frac{1}{x},-\frac{1}{y}\right)$ is uniformly continuous on $D'=(\frac{1}{2},\infty)^2$, and since $\theta_n\in D'$ for all $n\geq 3$, %and by Remark \ref{rem:unifcont}
$\nabla\phi(x,y)$ is uniformly continuous 
with respect to $\theta_n$.
Furthermore, $\lim\limits_{n\to\infty}\nabla\phi(\theta_n)=(0,-1)^t$. 
Thus, we can apply Theorem \ref{delta-method}
\[r_n(\phi(T_n)-\phi(\theta_n))\dconv \left(\lim\limits_{n\to\infty}\nabla\phi(\theta_n)\right)^t\cdot T.\]
Since $T$ is a two-dimensional normal distribution with expectation vector $0$ and identity covariance matrix,
multiplication by $(0,-1)^t$ gives a one-dimensional standard normal distribution $\mathcal{N}(0,1)$.
Hence
\[\sqrt{\tfrac{(n-1)p}{1-p}}\left[\log \left(\sqrt{\tfrac{2}{n-2}}\cdot \tfrac{|E\setminus N_j|}{d_j}\right)-\log\left(\sqrt{\tfrac{n-2}{2}}\right)\right]\dconv \mathcal{N}\left(0,1\right)\]
which leads to
\[\sqrt{\tfrac{(n-1)p}{1-p}}\left(\log\left(2 \tfrac{|E\setminus N_j|}{d_j}\right)-\log(n-2)\right)\dconv\mathcal{N}\left(0,1\right).\]
For another application of the Delta method, let $k=1$,
$$
D=(0,\infty),\, r_n=\sqrt{\tfrac{(n-1)p}{1-p}},\,T_n=\log\left(2 \tfrac{|E\setminus N_j|}{d_j}\right), \,\theta_n=\log(n-2), \,
\mbox{and }\phi(x)=\log(e^x+2).
$$ 
Note that again $\phi$ is continuously differentiable on $D$ with $\nabla\phi(x)=\frac{1}{1+\frac{2}{\exp(x)}}$ and $\lim\limits_{n\to\infty}\nabla\phi(\theta_n)=1$. 
Furthermore, $\nabla\phi$ is uniformly continuous with respect to $\theta_n$.
By Theorem \ref{delta-method}, we obtain 
$\sqrt{\frac{(n-1)p}{1-p}}\left(\log\left(2 \frac{|E\setminus N_j|}{d_j}+2\right)-\log n\right)\dconv\mathcal{N}\left(0,1\right)$  
which was our claim (equivalently, one could apply Slutzky's theorem).
\end{proof}

\section{Negligibility of the remaining term}
The remaining term is a composition of non-Perron eigenvalues of $B$ and entries of their corresponding eigenvectors. 
We begin by stating the following result on the spectral gap:
\begin{lem}\label{lem:spectralgap}
	Under the assumptions of Theorem \ref{thm:main}, with probability converging to 1 we have 
	for all $k=2,\dots, n$
	\[|\lambda_k|\lesssim \frac{2}{\sqrt{np}}.\]
\end{lem}

\begin{proof}
	Let $X$ have a $\mathrm{Bin}(n-2,p)$-distribution and define
\[\mu:=\EW{\tfrac{1}{X+1}}\quad\text{and}\quad\tau:=\EW{\tfrac{1}{\sqrt{{X+1}}}}.\]
Recall that by a result from \cite{Znidaric}
for a random variable $X$ with $X\sim\mathrm{Bin}(n,p)$ we have
	\[\EW{\left(\tfrac{1}{X+1}\right)^r}\approx\tfrac{1}{(np)^r}\]
	for all $r>0$.	
	Now let $$B'=\frac{1}{\rho}B$$ 
	with  
	$$\rho^2=(n-1)(p\mu^2-p^2\tau^4)\leq np\mu^2\approx \tfrac{1}{np}.
	$$ 
	Denote by $\mu_1\geq \dots\geq \mu_{n}$, $k=1,\dots,n$ the eigenvalues of $B'$. Since $B'$ satisfies Definition 2.2 in \cite{Erdoes13}, by Corollary 6.6 in the same paper,
	\begin{equation}
		|\mu_k|\lesssim 2+o(1)
		\label{eq:eigenvalueGapB'}
	\end{equation}
	for all $k=2,\dots,n$ with probability converging to 1
	(note, however, the different indexing: we sorted eigenvalues in descending order, \cite{Erdoes13} sort them increasingly. Furthermore, notice the difference in nomenclature: in \cite{Erdoes13} $\lambda_k$, $k=1,\dots,n$ denotes the eigenvalues of $H$, unlike here, where they denote the eigenvalues of $B$). We obtain that $\lambda_k=\rho\mu_k$ for the eigenvalues $\lambda_k$, $k=1,\dots,n$ of $B$.
	 Thus, for $k=2,\dots,n$ by \eqref{eq:eigenvalueGapB'}
	\[\lambda_k=\rho\mu_k\lesssim \tfrac{2}{\sqrt{np}}\]
	with probability converging to 1 and the claim holds.
\end{proof}

We will now prove that the sum in \eqref{repr_target} is asymptotically negligible on the appropriate scale:
\begin{prop}\label{eq:secondpartnegligible}
	Under the assumptions of Theorem \ref{thm:main}, 
	\begin{equation*}
		\sqrt{\frac{np}{1-p}}\log\left(\sum\limits_{k=2}^n\frac{1}{1-\lambda_k}v_{kj}^2\right)\Pconv 0,
	\end{equation*}
	as $n\to\infty$ for every $j\in\{1,\dots,n\}$.
\end{prop}
\begin{proof}
To prove the proposition, let us rewrite the term in the logarithm.
By expanding the geometric series (note that $|\lambda_k|<1$),
\begin{align*}Z_n\coloneqq & \sum\limits_{k=2}^{n}\frac{1}{1-\lambda_k}v_{kj}^2=\sum\limits_{k=2}^{n}\sum\limits_{m=0}^\infty\lambda_k^mv_{kj}^2=\sum\limits_{k=2}^{n}\Big(1+\lambda_k+\lambda_k^2\sum\limits_{m=0}^\infty\lambda_k^m\Big)v_{kj}^2
= \sum\limits_{k=2}^{n}\Big(1+\lambda_k+\frac{\lambda_k^2}{1-\lambda_k}\Big)v_{kj}^2
\end{align*}
As seen earlier in Section \ref{sec:spectraldecomposition} we have that $v_{1j}^2=\pi_j$, $\sum_{k=1}^{n}v_{kj}^2=1$ and 
moreover,
\[\sum\limits_{k=1}^{n}\lambda_kv_{kj}^2=\sum\limits_{k=1}^{n}\sum\limits_{i=1}^nb_{ji}v_{ki}v_{kj}=\sum\limits_{i=1}^nb_{ji}\cdot \langle v_i,v_j\rangle=b_{jj}=0.\] 
 Thus we arrive at
\begin{equation*}
	Z_n=\sum\limits_{k=2}^{n}v_{kj}^2+\sum\limits_{k=2}^{n}\lambda_kv_{kj}^2+\sum\limits_{k=2}^{n}\frac{1}{1-\lambda_k}\lambda_k^2v_{kj}^2=1-2\pi_j+\sum\limits_{k=2}^{n}\frac{\lambda_k^2}{1-\lambda_k}v_{kj}^2.
\end{equation*}
Finally, making use of Lemma \ref{lem:spectralgap}
\begin{equation*}
	0\leq \sum\limits_{k=2}^{n}\frac{\lambda_k^2}{1-\lambda_k}v_{kj}^2 = O\Big(\tfrac{1}{np}\Big)\cdot \sum\limits_{k=2}^{n}\frac{1}{1-\lambda_k}v_{kj}^2= O\Big(\tfrac{1}{np}\Big)\cdot Z_n, 
\end{equation*}
with probability converging to 1 and hence	
$$1-2\pi_j \leq Z_n \quad \mbox{ as well as }Z_n \leq 1-2\pi_j+Z_n\cdot O\Big(\tfrac{1}{np}\Big)$$  
which is equivalent to saying that 
%$$Z_n= \frac{1-2\pi_j}{1-O\left(\tfrac{1}{np}\right)},$$ with probability converging to 1,
%or that 
\begin{align}
-2\pi_j\leq Z_n-1\leq -2\pi_j\cdot(1+o(1))+O\Big(\tfrac{1}{np}\Big)
	\label{eq:secondterm}
\end{align}
with probability converging to 1.

As $\pi_j=\frac{d_j}{2|E|}$ for all $j=1, \ldots, n$ %=\frac{\sum\limits_{k\neq j}a_{jk}}{\sum\limits_{i=1}^n\sum\limits_{k\neq i}a_{ik}}
it is immediate to see that the $\pi_j$ are identically distributed for $j=1,\dots,n$. %(which is, of course, already clear from the construction of the random graph).
Furthermore,
$\sum_{j=1}^{n}d_j=2|E|$.
Thus obviously, we have
\[n\EW{\pi_j}=\sum\limits_{j=1}^{n}\EW{\pi_j}=\frac{1}{2|E|}\E\Bigl[\sum\limits_{j=1}^{n}d_j\Bigr]=1.\]
Therefore,
\[\E\Bigl[\sqrt{\tfrac{np}{1-p}}\pi_j\Bigr]=\sqrt{\tfrac{p}{n(1-p)}}\nconv 0\]
and by non-negativity of $\sqrt{\frac{np}{1-p}}\pi_j$ and Markov's inequality,
\begin{equation}
	\sqrt{\tfrac{np}{1-p}}\pi_j\Pconv 0,
	\label{eq:pi-term}
\end{equation}
and therefore together with \eqref{eq:secondterm}, $\sqrt{\frac{np}{1-p}}(Z_n-1)\Pconv 0$, using that $np(1-p)\to\infty$ as $n\to\infty$. By Lemma \ref{lem:convproblog}, we therefore obtain 
\[\sqrt{\tfrac{np}{1-p}}\log Z_n\Pconv 0,\]
proving the claim.
\end{proof}

\section{Proof of Theorem \ref{thm:main}}\label{final}
We proceed by completing the proof, using the decomposition \eqref{repr_target}
\begin{align*}
	\sqrt{\frac{np}{1-p}}\bigl(\log (H_j)-\log n\bigr)\hspace{-2cm}&\hspace{2cm}=\sqrt{\frac{np}{1-p}}\left[\log\left(2 \frac{|E\setminus N_j|}{d_j}+2\right)+\log\left(\sum\limits_{k=2}^{n}\frac{1}{1-\lambda_k}v_{kj}^2\right)-\log n\right]\\
	&=\sqrt{\frac{np}{1-p}}\left[\log\left(2 \frac{|E\setminus N_j|}{d_j}+2\right)-\log n\right]+\sqrt{\frac{np}{1-p}}\log\left(\sum\limits_{k=2}^{n}\frac{1}{1-\lambda_k}v_{kj}^2\right).
\end{align*}
The first term converges to a standard Gaussian random variable in distribution by Proposition \ref{eq:distconv}. The second part converges to 0 in probability by Proposition \ref{eq:secondpartnegligible}. By Slutzky's theorem we obtain
\begin{equation}
\sqrt{\tfrac{np}{1-p}}\left(\log {H_j}-\log{n}\right)\dconv \mathcal{N}(0,1)
\label{eq:logCLT}
\end{equation}
We apply one more instance of the Delta method: Choose $k=2$, 
$$
D=(\R^+)^2,\, r_n=\sqrt{\tfrac{np}{1-p}},\, T_n=(\log {H_j},n)^t,\, \theta_n=(\log n,n)^t, \mbox{  and }
\phi(x,y)=\tfrac{1}{y}\exp(x).$$ 
Notice that $\phi$ is continuously differentiable with 
$$\nabla\phi(x,y)=\left(\tfrac{1}{y}\exp(x),-\tfrac{1}{y^2}\exp(x)\right)^t,$$ 
and that $\lim\limits_{n\to\infty}\nabla\phi(\theta_n)=(1,0)^t$.

Furthermore, $\nabla\phi$ is uniformly continuous with respect to $\theta_n$: Let $\eps>0$ arbitrary, but fixed, and choose $\delta=\log(\frac\eps2+1)$ and $z=(z_1,z_2)$ with $0<\|z\|<\delta$. Without loss of generality, let $z_1,z_2>0$ (for negative $z_1$ or $z_2$, the procedure is analogous).
\begin{align*}
	\nabla\phi(\theta_n\!+\!z)-\nabla\phi(\theta_n)&=\Big(\tfrac{1}{n\!+\!z_2}\exp(\log n\!+\!z_1),-\tfrac{1}{(n\!+\!z_2)^2}\exp(\log(n\!+\!z_1))\Big)^t-\Big(1,-\tfrac{1}{n}\Big)^t\\
	&=\left(\tfrac{1}{n+z_2}ne^{z_1}-1,-\tfrac{1}{(n+z_2)^2}ne^{z_1}-\tfrac{1}{n}\right)^t
\end{align*}
The second component is of order $O\left(\frac{1}{n}\right)$. Thus, 
\begin{align*}
	\|\nabla\phi(\theta_n\!+\!z)-\nabla\phi(\theta_n)\|&=\sqrt{\left(\tfrac{1}{n+z_2}ne^{z_1}-1\right)^2+O\left(\tfrac{1}{n}\right)^2}\\
	&\leq\left|\tfrac{n(e^{z_1}-1)-z_2}{n+z_2}\right|+O\left(\tfrac{1}{n}\right)\leq\left|e^{z_1}-1\right|+\tfrac{z_2}{n}+O\left(\tfrac{1}{n}\right)=e^{z_1}-1+O\left(\tfrac{1}{n}\right),
\end{align*}
since $z_1>0$. Now by $z_1<\|z\|<\delta$, $e^{z_1}<e^\delta=\frac\eps2+1$. Additionally, there is some $N\in\N$ so that for all $n\geq N$, the $O\left(\frac{1}{n}\right)$-term is bounded by $\frac\eps2$. Thus, for all $n\geq N$ and all $z\in D$ with $\|z\|<\delta$,
\[\|\nabla\phi(\theta_n\!+\!z)-\nabla\phi(\theta_n)\|<\eps,\]
which proves uniform continuity with respect to $\theta_n$. By Theorem \ref{delta-method}:
\begin{equation*}
	\sqrt{\tfrac{np}{1-p}}\left(\tfrac{H_j}{n}-1\right)=\sqrt{\tfrac{np}{1-p}}\tfrac{H_j-n}{n}=\sqrt{\tfrac{p}{n(1-p)}}\bigl(H_j-n\bigr)\dconv \mathcal{N}(0,1).
	\label{eq:CLTcomplete}
\end{equation*}
This completes the proof of \cref{thm:main}.\qed
\appendix
\section{Auxiliary Results}\label{sec:appendix}
The following lemma shows the convergence in probability of logarithmic terms:
\begin{lem}\label{lem:convproblog}
	Let $a_n$ be a sequence of non-negative numbers and $X_n$ a sequence of random variables, so that, as $n\to\infty$, $a_n\to \infty$ and $a_nX_n\to 0$ in probability. Then $a_n\log(1+X_n)$ converges to 0 in probability.
\end{lem}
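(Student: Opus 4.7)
The plan is to exploit that $\log(1+x)$ is well-approximated by $x$ for small $x$, so that $a_n\log(1+X_n)$ behaves asymptotically like $a_nX_n$, which is assumed to go to zero in probability. The standard elementary estimate I would use is
\[
|\log(1+x)|\leq 2|x|\qquad\text{for all }|x|\leq 1/2,
\]
which follows from the Taylor series together with the geometric bound $\sum_{k\geq 0}(1/2)^k=2$. On the event $\{|X_n|\leq 1/2\}$ the logarithm is well-defined and
\[
|a_n\log(1+X_n)|\leq 2\,|a_nX_n|.
\]

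The first step is to observe that the hypothesis $a_nX_n\to 0$ in probability, combined with $a_n\to\infty$, already implies $X_n\to 0$ in probability: indeed, for fixed $\eta>0$ and all $n$ large enough so that $a_n\eta\geq 1$, one has $\P(|X_n|>\eta)\leq \P(|a_nX_n|>1)\to 0$. In particular $\P(|X_n|>1/2)\to 0$.

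The second step, which is really the whole argument, is to split the probability according to the event $\{|X_n|\leq 1/2\}$. For any $\eps>0$,
\[
\P(|a_n\log(1+X_n)|>\eps)\leq \P(|X_n|>1/2)+\P\bigl(\{|X_n|\leq 1/2\}\cap\{|a_n\log(1+X_n)|>\eps\}\bigr).
\]
The first summand vanishes as $n\to\infty$ by the previous step, and on the event in the second summand the elementary estimate above yields $|a_n\log(1+X_n)|\leq 2|a_nX_n|$, so that this term is bounded by $\P(|a_nX_n|>\eps/2)$, which also tends to zero by assumption. Hence $a_n\log(1+X_n)\to 0$ in probability, as claimed.

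There is no real obstacle here; the only point to be careful about is that $\log(1+X_n)$ need not be defined if $X_n\leq -1$, but this is automatically handled by restricting to the event $\{|X_n|\leq 1/2\}$ on which $1+X_n\geq 1/2>0$, while the complementary event is asymptotically negligible.
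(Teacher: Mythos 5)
Your proof is correct, and it takes a genuinely different (and simpler) route than the paper. The paper's argument proceeds via the subsequence criterion for convergence in probability: it extracts, from every subsequence, a further subsequence along which both $a_nX_n\to 0$ and $X_n\to 0$ almost surely, and it controls the logarithm by applying Cauchy--Schwarz to the Taylor series, yielding $|\log(1+X_n)|\le C\sqrt{X_n^2/(1-X_n^2)}$ with $C=\sqrt{\pi^2/6}$, before passing back to convergence in probability. You instead use the elementary pointwise bound $|\log(1+x)|\le 2|x|$ on $\{|x|\le 1/2\}$ and a direct union bound
\[
\P\bigl(|a_n\log(1+X_n)|>\eps\bigr)\le \P\bigl(|X_n|>1/2\bigr)+\P\bigl(|a_nX_n|>\eps/2\bigr),
\]
both terms of which vanish by hypothesis. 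Your approach avoids the subsequence machinery entirely and gives a quantitative bound on the exceptional probability at each $n$; it also handles more cleanly the point (only implicit in the paper) that $\log(1+X_n)$ may be undefined when $X_n\le -1$, by confining the estimate to the event $\{|X_n|\le 1/2\}$ whose complement is asymptotically negligible. Both proofs are valid; yours is the more elementary and the more self-contained of the two.
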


\begin{proof}
	Let $\eps>0$ and choose $\zeta>0$ such that $\zeta+\frac{\zeta^2}{2}=\eps$. Since $a_nX_n\to 0$ in probability, $\P(|a_nX_n|\leq \zeta)\to 1$, so for every $\delta>0$ there is a $N(\delta)\in\N$ such that for all $n\geq N(\delta)$, $\P(|a_nX_n|\leq \zeta)\geq 1-\delta$ and $a_n\geq 1$ (since $a_n\to\infty$). Using a Taylor series, there is an $\alpha\geq 0$ such that $\log(1+x)=x-\frac{1}{2(1+\alpha)}x^2$, therefore for $\omega\in\Omega_{n,\zeta}\coloneqq\{|a_nX_n(\omega)|\leq \zeta\}$, 
	\[|a_n\log(1+X_n(\omega))|=\Big|a_nX_n(\omega)-\tfrac{a_n}{2(1+\alpha)}X_n^2(\omega)\Big|\leq |a_nX_n(\omega)|+\Big|\tfrac{a_n}{2}X_n^2(\omega)\Big|\leq \zeta+\tfrac{\zeta^2}{2}=\eps.\]
	Thus, for every $\delta>0$, there is a $N(\delta)\in\N$ such that for all $n\geq N(\delta)$,
	\[\P(|a_n\log(1+X_n(\omega))|\leq\eps)\geq \P(|a_nX_n|\leq \zeta)\geq 1-\delta,\]
	i.e. $\P(|a_n\log(1+X_n(\omega))|\leq\eps)\to1$. Since $\eps>0$ was chosen arbitrarily, $a_n\log(1+X_n(\omega))\to 0$ in probability.
\end{proof}

The uniform Delta method, c.f. Theorem 3.8. in \cite{Vaart98}, is a crucial point in our analysis. We extend the result by van der Vaart to include the setting where one or multiple components of $\theta_n$ tend to infinity as $n$ increases: 

\begin{thm}[Delta-Method]\label{delta-method}
	Consider an open set $D\subset\R^k$ and random vectors $T_n$ with values in $D$ and satisfying $r_n(T_n-\theta_n)\dconv T$  for vectors $\theta_n\in D$, scalars $r_n\to\infty$ and a random vector $T\in\R^k$. Consider a  continuously differentiable function $\phi:D\to\R$. Furthermore, assume that $\nabla\phi$ is uniformly continuous with respect to $\theta_n$ and that $\lim\limits_{n\to\infty}\nabla\phi(\theta_n)$ exists. Then 
	\[r_n\left(\phi(T_n)-\phi(\theta_n)\right)\dconv \bigl(\lim\limits_{n\to\infty}\nabla\phi(\theta_n)\bigr)^t\cdot T.\] 
	By $\nabla\phi$ we denote the gradient of $\phi$.
\end{thm}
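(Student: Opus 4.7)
The plan is to mimic the classical proof of the Delta method via a first-order Taylor expansion along the segment from $\theta_n$ to $T_n$, and to use the uniform-continuity-with-respect-to-$(\theta_n)$ hypothesis to replace the usual compactness/fixed-point argument, which breaks down when $\theta_n$ drifts out of every compact subset of $D$.

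First I would observe that $T_n-\theta_n = r_n^{-1}\cdot r_n(T_n-\theta_n)\dconv 0\cdot T = 0$, and since the limit is the constant vector $0$ this upgrades to $T_n-\theta_n\Pconv 0$. Next, applying the fundamental theorem of calculus to the map $s\mapsto \phi(\theta_n+s(T_n-\theta_n))$ on the event that the segment $[\theta_n,T_n]$ lies in $D$, one obtains
\[
\phi(T_n)-\phi(\theta_n)=G_n^t(T_n-\theta_n),\qquad G_n:=\int_0^1 \nabla\phi\bigl(\theta_n+s(T_n-\theta_n)\bigr)\,ds.
\]
Multiplying through by $r_n$ gives $r_n(\phi(T_n)-\phi(\theta_n))=G_n^t\cdot r_n(T_n-\theta_n)$, so the conclusion will follow from Slutsky's theorem as soon as I establish $G_n\Pconv L:=\lim_{n\to\infty}\nabla\phi(\theta_n)$.

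To show this, fix $\eps>0$ and let $\delta>0$, $N\in\N$ be the constants provided by Definition~\ref{defn:unif-cont-seq} applied to $\nabla\phi$ with tolerance $\eps/2$. On the event $\{\|T_n-\theta_n\|<\delta\}\cap\{n\geq N\}$, which has probability tending to $1$, the uniform continuity assumption forces the whole segment $[\theta_n,T_n]$ to lie in $D$ (since for every $s\in[0,1]$ the point $\theta_n+s(T_n-\theta_n)$ is of the form $\theta_n+z$ with $\|z\|<\delta$, for which $\nabla\phi$ must be defined), and
\[
\|G_n-\nabla\phi(\theta_n)\|\leq \int_0^1 \bigl\|\nabla\phi\bigl(\theta_n+s(T_n-\theta_n)\bigr)-\nabla\phi(\theta_n)\bigr\|\,ds < \tfrac{\eps}{2}.
\]
Combining this with the deterministic convergence $\nabla\phi(\theta_n)\to L$ and the triangle inequality yields $\|G_n-L\|<\eps$ with probability tending to $1$, so $G_n\Pconv L$. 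Slutsky's theorem then delivers $r_n(\phi(T_n)-\phi(\theta_n))\dconv L^t\cdot T$, which is the claim.

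The main obstacle is conceptual rather than computational: the textbook Delta method argument exploits the fact that, at a \emph{fixed} base point $\theta\in D$, continuous differentiability of $\phi$ yields $\nabla\phi(\theta+h)\to\nabla\phi(\theta)$ as $h\to 0$, and this pointwise statement suffices to control the Taylor remainder. Once $\theta_n$ itself is allowed to move, and in particular to escape every compact subset of $D$ (as happens in our application where a coordinate of $\theta_n$ tends to $\infty$), one must control $\nabla\phi$ in shrinking random neighborhoods of $\theta_n$ \emph{uniformly in $n$}. Formalizing exactly which weakening of uniform continuity suffices, and verifying that it still keeps $[\theta_n,T_n]\subset D$ with high probability, is the only genuinely new ingredient compared to Theorem~3.8 in \cite{Vaart98}, and it is precisely what Definition~\ref{defn:unif-cont-seq} is tailored to supply.
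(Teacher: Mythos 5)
Your proof is correct and follows essentially the same route as the paper's: a first-order Taylor expansion of $\phi$ along the segment from $\theta_n$ to $T_n$, with the uniform-continuity-with-respect-to-$(\theta_n)$ hypothesis controlling the gradient term, followed by Slutsky. The only cosmetic difference is that you use the integral form of the remainder ($G_n=\int_0^1\nabla\phi(\theta_n+s(T_n-\theta_n))\,ds$) and show $G_n\Pconv\lim_n\nabla\phi(\theta_n)$ directly, whereas the paper uses the mean value theorem and an explicit event decomposition to show the remainder $r_nR_n(T_n-\theta_n)\Pconv 0$; both arguments are equivalent in substance.
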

\begin{proof}
	For fixed $h\in \R^k$ and $s\in[0,1]$, define $g_n(s)=\phi(\theta_n+s h)$. Because $\phi$ is continuously differentiable, $g_n:[0,1]\to\R$ is also continuously differentiable with derivative $g_n'(s)=\bigl(\nabla\phi(\theta_n+s h)\bigr)^t\cdot h$. By the 
	mean-value theorem, $g_n(1)-g_n(0)=g_n'(\xi)$ for some $\xi\in[0,1]$. Thus,
	\begin{align}
		R_n(h)&:=\phi(\theta_n+h)-\phi(\theta_n)-\bigl(\lim\limits_{n\to\infty}\nabla\phi(\theta_n)\bigr)^t\cdot h\label{eq:R_ndef}\\
		&=g_n(1)-g_n(0)-\bigl(\lim\limits_{n\to\infty}\nabla\phi(\theta_n)\bigr)^t\cdot h\notag\\
		&=g_n'(\xi)-\bigl(\lim\limits_{n\to\infty}\nabla\phi(\theta_n)\bigr)^t\cdot h\notag\\
		&=\bigl(\nabla\phi(\theta_n+\xi h)\bigr)^t\cdot h-\bigl(\lim\limits_{n\to\infty}\nabla\phi(\theta_n)\bigr)^t\cdot h\notag\\
		&=\Big[\bigl(\nabla\phi(\theta_n+\xi h)\bigr)-\bigl(\lim\limits_{n\to\infty}\nabla\phi(\theta_n)\bigr)\Big]^t\cdot h.
		\label{eq:R_ndef2}
	\end{align}
	Now denote by $\|\cdot\|$ the Euclidean norm. By the triangle inequality,
	\begin{align*}\Big\|\bigl(\nabla\phi(\theta_n+\xi h)\bigr)^t-\bigl(\lim\limits_{n\to\infty}\nabla\phi(\theta_n)\bigr)^t\Big\|\hspace{-3cm}&\\
		&\leq\left\|\left(\nabla\phi(\theta_n+\xi h)\right)^t-\left(\nabla\phi(\theta_n)\right)^t\right\|+\Big\|\left(\nabla\phi(\theta_n)\right)^t-\bigl(\lim\limits_{n\to\infty}\nabla\phi(\theta_n)\bigr)^t\Big\|.
	\end{align*}
	For any $\eps>0$, the second term on the right hand side is bounded by $\eps/2$ for sufficiently large $n$ by definition of the limit.
	Furthermore, by uniform continuity of $\nabla\phi$ with respect to $\theta_n$, there is some $\delta>0$ so that the first term 
	on the right hand side is also bounded by $\eps/2$ as long as $\left\|\xi h\right\|<\delta$. Thus, if $\|h\|<\delta$ and therefore 
	by $0\leq \xi\leq 1$ also $\|\xi h\|<\delta$, we find by \cref{eq:R_ndef2}
	\begin{align}
		\begin{split}
			|R_n(h)|&=\big|\big\langle\nabla\phi(\theta_n+\xi h)-\lim\limits_{n\to\infty}\nabla\phi(\theta_n), h\big\rangle\big|\\
			&\leq\big\|\nabla\phi(\theta_n+\xi h)-\lim\limits_{n\to\infty}\nabla\phi(\theta_n)\big\|\cdot \|h\|<\eps\|h\| <\eps\delta
			\label{eq:R_nabsch}\end{split}
	\end{align} 
	for sufficiently large $n$.
	For any $\eta>0$, denote  $A_n=\big\{\|R_n(T_n-\theta_n)\|\geq\frac{\eta}{r_n}\big\}$ and $B_n=\big\{\|T_n-\theta_n\|\geq \delta\big\}$. Then, by \cref{eq:R_nabsch}, $A_n\cap B_n^c\subseteq \left\{\eps\delta\geq \frac{\eta}{r_n}\right\}$.
	Since $r_n\to\infty$, this set is asymptotically empty, therefore $\WK{A_n\cap B_n^c}\nconv 0$.
	On the other hand, $$\WK{A_n\cap B_n}\leq \WK{B_n}\nconv 0,$$ because from $r_n(T_n-\theta_n)\dconv T$ we conclude that $\|T_n-\theta_n\|\Pconv 0$.
	Altogether,  $\WK{A_n}\nconv 0$, i.e. $r_n\cdot R_n(T_n-\theta_n)$ converges to 0 in probability. Thus, by \cref{eq:R_ndef}
	\[r_n\left(\phi(T_n)-\phi(\theta_n)\right)-\bigl(\lim\limits_{n\to\infty}\nabla\phi(\theta_n)\bigr)^t\cdot r_n(T_n-\theta_n)\Pconv 0.\]
	Since $\bigl(\lim\limits_{n\to\infty}\nabla\phi(\theta_n)\bigr)^t\cdot r_n(T_n-\theta_n)$ converges to $\bigl(\lim\limits_{n\to\infty}\nabla\phi(\theta_n)\bigr)^t\cdot T$ in distribution, 
	\[r_n\bigl(\phi(T_n)-\phi(\theta_n)\bigr)\dconv \bigl(\lim\limits_{n\to\infty}\nabla\phi(\theta_n)\bigr)^t\cdot T\]
	which completes the proof.
\end{proof}

{\footnotesize 
}
\label{ende}
\end{document}